\documentclass[10pt,conference]{IEEEtran}
\IEEEoverridecommandlockouts
\usepackage{needspace, amsthm}
\makeatletter
\def\ps@headings{%
\def\@oddhead{\mbox{}\scriptsize\rightmark \hfil \thepage}%
\def\@evenhead{\scriptsize\thepage \hfil \leftmark\mbox{}}%
\def\@oddfoot{}%
\def\@evenfoot{}}
\makeatother
\pagestyle{empty}

\usepackage{amsfonts, amsmath}
\usepackage{url}
\usepackage{subfigure}
\usepackage{wrapfig}
\newlength{\smpagewidth}
\newlength{\smpageheight}

\setlength{\smpagewidth}{8.5in}
\setlength{\smpageheight}{11in}

\newcommand{\setleftmargin}[1]{
	\addtolength{\textwidth}{\oddsidemargin}
	\addtolength{\textwidth}{1in}
	\addtolength{\textwidth}{-#1}
	\setlength{\oddsidemargin}{-1in}
	\addtolength{\oddsidemargin}{#1}
	\setlength{\evensidemargin}{\oddsidemargin}
}
\newcommand{\setrightmargin}[1]{
	\setlength{\textwidth}{\smpagewidth}
	\addtolength{\textwidth}{-\oddsidemargin}
	\addtolength{\textwidth}{-1in}
	\addtolength{\textwidth}{-#1}
}
\newcommand{\settopmargin}[1]{
	\addtolength{\textheight}{\topmargin}
	\addtolength{\textheight}{1in}
	\addtolength{\textheight}{\headheight}
	\addtolength{\textheight}{\headsep}
	\addtolength{\textheight}{-#1}
	\setlength{\topmargin}{-1in}
	\addtolength{\topmargin}{-\headheight}
	\addtolength{\topmargin}{-\headsep}
	\addtolength{\topmargin}{#1}
}
\newcommand{\setbottommargin}[1]{
	\setlength{\textheight}{\smpageheight}
	\addtolength{\textheight}{-\topmargin}
	\addtolength{\textheight}{-1in}
	\addtolength{\textheight}{-\footskip}
	\addtolength{\textheight}{-#1}
}

\usepackage{amsfonts}
\usepackage{amssymb}
\usepackage{acronym}
\usepackage{color}

\usepackage{graphicx}
\usepackage{color}

\newcommand{\field}[1]{\mathbb{#1}}

\def\l{\lambda}

\def\a{\alpha}

\def\rn{\mathbb{R}^n}

\def\re{\mathbb{R}}

\newtheorem{theorem}{Theorem}

\newtheorem{assumption}{Assumption}
\newtheorem{algorithm}{Algorithm}

\newtheorem{lemma}{Lemma}

\renewenvironment{proof}[1]{\needspace{1\baselineskip}\noindent {\bf Proof #1:} } {\hfill $\blacksquare$ \medskip}{}

\setleftmargin{.75in}
\setrightmargin{.75in}
\settopmargin{.80in}
\setbottommargin{.80in}

\begin{document}

%
%

\title{\vspace{.1in}
\huge {A Distributed Line Search for  Network Optimization\thanks{This research is supported by Army Research Lab MAST
Collaborative Technology Alliance, AFOSR complex networks program, ARO P-57920-NS, NSF CAREER CCF-0952867, and NSF CCF-1017454, ONR MURI N000140810747 and NSF-ECS-0347285.}}}


\author{Michael Zargham$^{\dagger}$, Alejandro Ribeiro$^{\dagger}$, Ali Jadbabaie$^{\dagger}$ \thanks{$^{\dagger}$Michael Zargham, Alejandro Ribeiro and Ali Jadbabaie are with the Department of Electrical and Systems Engineering, University of Pennsylvania.}
}

\maketitle

\thispagestyle{empty}
\begin{abstract}

Dual descent methods are used to solve network optimization problems because descent directions can be computed in a distributed manner using information available either locally or at neighboring nodes. However, choosing a stepsize in the descent direction remains a challenge because its computation requires global information. This work presents an algorithm based on a local version of the Armijo rule that allows for the computation of a stepsize using only local and neighborhood information. We show that when our distributed line search algorithm is applied with a descent direction computed according to the Accelerated Dual Descent method \cite{acc11}, key properties of standard backtracking line search using the Armijo rule are recovered. We use simulations to demonstrate that our algorithm is a practical substitute for its centralized counterpart.
\end{abstract}

\section{Introduction}

Conventional approaches to distributed network optimization are based on iterative descent in either the primal or dual domain. The reason for this is that for many types of network optimization problems there exist descent directions that can be computed in a distributed fashion. Subgradient descent algorithms, for example, implement iterations through distributed updates based on local information exchanges with neighboring nodes; see e.g., \cite{mung, kelly, low, Srikant}. However, practical applicability of the resulting algorithms is limited by exceedingly slow convergence rates typical of gradient descent algorithms. Furthermore, since traditional line search methods require global information, fixed stepsizes are used, exacerbating the already slow convergence rate, \cite{averagepaper, CISS-rate}.

Faster distributed descent algorithms have been recently developed by constructing approximations to the Newton direction using iterative local information exchanges, \cite{lowdiag,cdc09,acc11}. These results build on earlier work in \cite{BeG83} and \cite{cgNewt} which present Newton-type algorithms for network flow problems that, different from the more recent versions in \cite{cdc09} and \cite{acc11}, require access to all network variables. To achieve global convergence and recover quadratic rates of centralized Newton's algorithm \cite{cdc09} and \cite{acc11} use distributed backtracking line searches that use average consensus to verify global exit conditions. Since each backtracking line search step requires running a consensus iteration with consequently asymptotic convergence \cite{fagnani, spielman}, the exit conditions of the backtracking line search can only be achieved up to some error. Besides introducing inaccuracies, computing stepsizes with a consensus iteration is not a suitable solution because the consensus iteration itself is slow. Thus, the quadratic convergence rate of the algorithms in \cite{cdc09} and \cite{acc11} is to some extent hampered by the linear convergence rate of the line search. This paper presents a distributed line search algorithm based on local information so that each node in the network can solve its own backtracking line search using only locally available information.

Work on line search methods for descent algorithms can be found in \cite{NMLS,zhang,cgLS}. The focus in \cite{NMLS} and \cite{zhang} is on nonmonotone line searches which improve convergent rates for Newton and Newton-like descent algorithms. The objective in \cite{cgLS} is to avoid local optimal solutions in nonconvex problems. While these works provide insights for developing line searches they do not tackle the problem of dependence on information that is distributed through nodes of a graph.

To simplify discussion we restrict attention to the network flow problem. Network connectivity is modeled as a directed graph and the goal of the network is to support a single information flow specified by incoming rates at an arbitrary number of sources and outgoing rates at an arbitrary number of sinks. Each edge of the network is associated with a concave function that determines the cost of traversing that edge as a function of flow units transmitted across the link. Our objective is to find the optimal flows over all links. Optimal flows can be found by solving a concave optimization problem with linear equality constraints (Section II).  Evaluating a line search algorithm requires us to choose a descent direction. We choose to work with the family of Accelerated Dual Descent (ADD) methods introduced in \cite{acc11}. Algorithms in this family are parameterized by the information dependence between nodes. The $N$th member of the family, shorthanded as ADD-N, relies on information exchanges with nodes not more than $N$ hops away. Similarly, we propose a group of line searches that can be implemented through information exchanges with nodes in this $N$ hop neighborhood.

Our work is based on the Armijo rule which is the workhorse condition used in backtracking line searches, \cite[Section 7.5]{LaNLP}. We construct a local version of the Armijo rule at each node by taking only the terms computable at that node, using information from no more than $N$ hops away(Section III).  Thus the line search always has the same information requirements as the descent direction computed via the ADD-N algorithm.  Our proofs(Section IV) leverage the information dependence properties of the algorithm to show that key properties of the backtracking line search are preserved: (i) We guarantee the selection of unit stepsize within a neighborhood of the optimal value (Section IV-A). (ii) Away from this neighborhood, we guarantee a strict decrease in the optimization objective (Section IV-B). These properties make our algorithm a practical distributed alternative to standard backtracking line search techniques. Simulations further demonstrate that our line search is functionally equivalent to its centralized counterpart (Section V).

\section{Network Optimization}\label{mincost-Newton}

Consider a network represented by a directed graph ${\cal G}=({\cal N},{\cal E})$ with node set ${\cal N}=\{1,\ldots,n\}$, and edge set ${\cal E} = \{1,\ldots,E\}$.  The $i$th component of vector $x$ is denoted as $x^{i}$. The notation $x\ge 0$ means that all components $x^i\ge 0$. The network is deployed to support a single information flow specified by incoming rates $b^{i}>0$ at source nodes and outgoing rates $b^{i}<0$ at sink nodes. Rate requirements are collected in a vector $b$, which to ensure problem feasibility has to satisfy $\sum_{i=1}^{n}b^{i}=1$. Our goal is to determine a flow vector $x=[x^e]_{e\in {\cal E}}$, with $x^e$ denoting the amount of flow on edge $e=(i,j)$.

Flow conservation implies that it must be $Ax=b$, with $A$ the $n\times E$ node-edge incidence matrix defined as
\[ [A]_{ij} = \left\{
\begin{array}{ll}
 1 & \hbox{if edge $j$ leaves node $i$} , \\
-1 & \hbox{if edge $j$ enters node $i$},  \\
 0 & \hbox{otherwise,}
\end{array}\right.\]
where $[A]_{ij}$ denotes the element in the $i$th row and $j$th column of the matrix $A$. We define the reward as the negative of scalar cost function $\phi_e(x^{e})$ denoting the cost of $x^e$ units of flow traversing edge $e$. We assume that the cost functions $\phi_e$ are strictly convex and twice continuously differentiable. The maximum reward network optimization problem is then defined as
\begin{equation}
	\begin{array}{ll}\hbox{maximize }& -f(x)=\sum_{e=1}^E -\phi_e(x^e)\\ \hbox{subject to: }& Ax=b.\label{optnet}\end{array}
\end{equation}
Our goal is to investigate a distributed line search technique for use with Accelerated Dual Descent (ADD) methods for solving the optimization problem in (\ref{optnet}). We begin by discussing the Lagrange dual problem of the formulation in (\ref{optnet}) in Section \ref{subsection:dualform}) and reviewing the ADD method in Section \ref{subsection:ADD}.

\subsection{Dual Formulation} \label{subsection:dualform}
Dual descent algorithms solve (\ref{optnet}) by descending on the Lagrange dual function $q(\lambda)$. To construct the dual function consider the Lagrangian ${\cal L} (x,\l) = -\sum_{e=1}^E \phi_e(x^e) +\l'(Ax-b)$ and define
\begin{eqnarray} \label{eqn_separable_dual}
	q(\l)  \!\!&=&\!\! \sup_{x\in \re^E} {\cal L} (x,\l) \nonumber \\
                &=&\sup_{x\in \re^E} \left(-\sum_{e=1}^E \phi_e(x^e) +\l'Ax\right) - \l'b \nonumber\\
		       &=&\!\! \sum_{e=1}^E \sup_{x^e \in \re} \Big((\l'A)^e x^e-\phi_e(x^e)\Big) - \l'b,
\end{eqnarray}
where in the last equality we wrote $\l'Ax = \sum_{e=1}^{E}(\l'A)^e x^e$ and exchanged the order of the sum and supremum operators.

It can be seen from (\ref{eqn_separable_dual}) that the evaluation of the dual function $q(\l)$ decomposes into the $E$ one-dimensional optimization problems that appear in the sum. We assume that each of these problems has an optimal solution, which is unique because of the strict convexity of the functions $\phi_e$. Denote this unique solution as $x^e(\l)$ and use the first order optimality conditions for these problems in order to write
\begin{equation}
	x^e(\l) = (\phi_e')^{-1} (\l^i-\l^j),\label{primal-sol}
\end{equation}
where $i\in{\cal N}$ and $j\in{\cal N}$ respectively denote the source and destination nodes of edge $e=(i,j)$. As per (\ref{primal-sol}) the evaluation of $x^e(\l)$ for each node $e$ is based on local information
about the edge cost function $\phi^e$ and the dual variables of the incident nodes $i$ and $j$.

The dual problem of (\ref{optnet}) is defined as $\min_{\l\in \re^n} q(\l)$. The dual function is convex, because all dual functions of minimization problems are, and differentiable, because the $\phi_e$ functions are strictly convex. Therefore, the dual problem can be solved using any descent algorithm of the form
\begin{equation}
	\l_{k+1} = \l_k + \a_k d_k\qquad \hbox{for all }k\ge 0, \label{sgit}
\end{equation}
where the descent direction $d_k$ satisfies $g_k'd_k<0$ for all times $k$ with $g_k = g(\l_{k})=\nabla q(\l_{k})$ denoting the gradient of the dual function $q(\l)$ at $\l=\l_k$.  An important observation here is that we can compute the elements of $g_k$ as
 \begin{equation}\label{eqn_dual_update_distributed}
	g_k^{i} = \sum_{e =(i,j)} x^e(\l_{k}) -  \sum_{e =(j,i)} x^e(\l_{k})  - b_{i}.
\end{equation}
with the vector $x(\l_k)$ having components $x^e(\l_k)$ as determined by (\ref{primal-sol}) with $\l=\l_k$, \cite[Section 6.4]{nlp}.  An important fact that follows from \eqref{eqn_dual_update_distributed} is that the $i$th element $g_k^{i}$ of the gradient $g_k$ can be computed using information that is either locally available $x^{(i,j)}$ or available at neighbors $x^{(j,i)}$.  Thus, the simplest distributed dual descent algorithm, known as subgradient descent takes $d_k=-g_k$. Subgradient descent suffers from slow convergence so we work with an approximate Newton direction.

\subsection{Accelerated Dual Descent} \label{subsection:ADD}
The Accelerated Dual Descent (ADD) method is a parameterized family of dual descent algorithms developed in \cite{acc11}. An algorithm in the ADD family is called ADD-N and each node uses information from $N$-hop neighbors to compute its portion of an approximate Newton direction. Two nodes are $N$-hop neighbors if the shortest undirected path between those nodes is less than or equal to $N$.

The exact Newton direction $d_k$ is defined as the solution of the linear equation $ H_k d_k = -g_k$ where $H_k = H(\l_{k})=\nabla^2 q(\l_{k})$ denotes the Hessian of the dual function.  We approximate $d_k$ using the ADD-N direction defined as
\begin{equation}
d_k^{(N)} = -\bar H_k^{(N)} g_k \label{ADDd}
\end{equation}
where the approximate Hessian inverse, $\bar H_k^{(N)}$ is defined
\begin{equation}
\bar H_k^{(N)} =\sum_{r=0}^N D_k^{-\frac{1}{2}}\!\left(D_k^{-\frac{1}{2}} B_k D_k^{-\frac{1}{2}}\right)^r\!D_k^{-\frac{1}{2}}\!
\label{ADD_h}
\end{equation}
using a Hessian splitting: $H_k= D_k-B_k$ where $D_k$ is the diagonal matrix $[D_k]_{ii} = [H_k]_{ii}$.  The resulting accelerated dual descent algorithm
\begin{equation}
	\l_{k+1} = \l_k + \a_k d^{(N)}_k\qquad \hbox{for all }k\ge 0, \label{update}
\end{equation}
can be computed using information from $N$-hop neighbors because the dependence structure of $g_k$ shown in equation (\ref{eqn_dual_update_distributed}) causes the Hessian to have a local structure as well: $[H_k]_{ij} \not = 0$ if and only if $(i,j)\in \mathcal{E}$.  since $H_k$ has the sparsity pattern of the network, $B_k$ and thus $D_k^{-\frac{1}{2}} B_k D_k^{-\frac{1}{2}}$ must also have the sparsity pattern of the graph.  Each term $D_k^{-\frac{1}{2}}\!\left(D_k^{-\frac{1}{2}} B_k D_k^{-\frac{1}{2}}\right)^r\!D_k^{-\frac{1}{2}}\!$ is a matrix which is non-zero only for $r$-hop neighbors so the sum is non-zero only for $N$-hop neighbors.

Analysis of the ADD-N algorithm fundamentally depends on a network connectivity coefficient $\bar \rho$, which is defined in \cite{acc11} as the bound
\begin{equation}
 \rho \left(B_k D_k^{-1}\right) \le \bar\rho \in (0,1) \label{rhobar}
 \end{equation}
 where $\rho(\cdot)$ denotes the second largest eigenvalue modulus.  When $\bar\rho$ is small, information in the network spreads efficiently and $d_k^{(N)}$ is a more exact approximation of $d_k$.  See \cite{acc11} for details.

%
%
%
\section{Distributed Backtracking Line Search}

Algorithms ADD-$N$ for different $N$ differ in their information dependence. Our goal is to develop a family of distributed backtracking line searches parameterized by the same $N$ and having the same information dependence. The idea is that the $N^{th}$ member of the family of line searches is used in conjunction with the $N^{th}$ member of the ADD family to determine the step and descent direction in (\ref{update}). As with the ADD-$N$ algorithm, implementing the distributed backtracking line search requires each node to get information from its $N$-hop neighbors.

Centralized backtracking line searches are typically intended as method to find a stepsize  $\alpha$ that satisfies Armijo's rule. This rule requires the stepsize $\alpha$ to satisfy the inequality
\begin{equation}q(\lambda+\alpha d) \le q(\lambda) + \sigma \alpha d'g, \label{armijo}\end{equation}
for given descent direction $d$ and search parameter $\sigma\in (0,1/2)$. The backtracking line search algorithm is then defined as follows:
\begin{algorithm}\label{BLS}
Consider the objective function $q(\cdot)$ and given variable value $\lambda$ and corresponding descent direction $d$ and dual gradient $g$. The backtracking line search algorithm is:
\begin{itemize}
\item[\footnotesize\bf] Initialize $\alpha=1$
\item[\footnotesize\bf] {\bf while} $q(\lambda+\alpha d) > q(\lambda) + \sigma \alpha d'g$
\item[\footnotesize\bf] $\qquad\alpha = \alpha \beta$
\item[\footnotesize\bf] \bf{end}
\end{itemize}
The scalars $\beta\in (0,1)$ and $\sigma \in (0,1/2)$ are given parameters.
\end{algorithm}

This line search algorithm is commonly used with Newton's method because it guarantees a strict decrease in the objective and once in an error neighborhood it always selects $\alpha=1$ allowing for quadratic convergence, \cite[Section 9.5]{boydbook}.

In order to create a distributed version of the backtracking line search we need a local version of the Armijo Rule.  We start by decomposing the dual objective $q(\lambda) = \sum_{i=1}^n q_i(\lambda)$ where the local objectives takes the form \begin{equation}q_i(\lambda)= \sum_{e=(j,i)} \phi_e(x^e)-\lambda_i(a_i'x-b_i).\label{ldep}\end{equation} The vector $a_i'$ is the $i^{th}$ row of the incidence matrix $A$. Thus the local objective $q_i(\lambda)$ depends only on the flows adjacent to node $i$ and $\lambda^i$.

An $N$-parameterized local Armijo rule is therefore given by
\begin{equation}
q_i(\lambda+\alpha_i d)\le q_i(\lambda)+\sigma\alpha_i\sum_{j\in \mathcal{N}_i^{(N)}} {d^j g^j},
\label{armijoN}
\end{equation}
where $\mathcal{N}_j^{(N)}$ is the set of $N$-hop neighbors of node $j$.  The scalar $\sigma\in (0,1/2)$ is the same as in (\ref{armijo}), $g=\nabla q(\lambda)$ and $d$ is a descent direction.  Each node is able to compute a stepsize $\alpha_i$
satisfying (\ref{armijoN}) using $N$-hop information.  The stepsize used for the dual descent update (\ref{sgit}) is
\begin{equation}
\alpha = \min_{i\in \mathcal{N}} \alpha_i.
\end{equation}
Therefore, we define the distributed backtracking line search according to the following algorithm.

\begin{algorithm}
Given local objectives $q_i(\cdot)$, descent direction $d$ and dual gradient $g$.
\begin{itemize}
\item[\footnotesize\bf] \bf{for} $i=1:n$
\item[\footnotesize\bf] $\qquad$Initialize $\alpha_i=1$
\item[\footnotesize\bf] $\qquad${\bf while} $q_i(\lambda+\alpha_i d)> q_i(\lambda)+\sigma\alpha_i\sum_{j\in \mathcal{N}_i^{(N)}} {d^j g^j}$
\item[\footnotesize\bf] $\qquad\qquad\alpha_i = \alpha_i \beta$
\item[\footnotesize\bf] $\qquad$\bf{end}
\item[\footnotesize\bf] \bf{end}
\item[\footnotesize\bf] $\alpha = \min_i \alpha_i$
\end{itemize}
The scalars $\beta\in (0,1)$, $\sigma \in (0,1/2-\bar\rho^{N+1}/2)$ and $N\in \field{Z}^+$ are parameters.
\label{DBLS}
\end{algorithm}

The distributed backtracking line search described in Algorithm \ref{DBLS} works by allowing each node to execute its own modified version of Algorithm \ref{BLS} using only information from $N$-hop neighbors. Minimum consensus of $\alpha_i$ requires at most diameter of $\mathcal{G}$ iterations. If each node shares its current $\alpha_i$ along with $g_k^i$ with its $N$-hop neighbors the maximum number of iterations drops to $\lceil \hbox{diam}(\mathcal{G})/N \rceil$.

The parameter $\sigma$ is restricted by the network connectivity coefficient $\bar\rho$ and the choice of $N$ because these are scalars which encode information availability.  Smaller $\bar\rho^{N+1}$ indicates more accessible information and thus allows for greater $\sigma$ and thus a more aggressive search.  As $\bar\rho^{N+1}$ approaches zero, we recover the condition $\sigma\in(0,1)$ from Algorithm \ref{BLS}.

\section{Analysis}

In this section we show that when implemented with the Accelerated Dual Descent update in (\ref{update}) the distributed backtracking line search defined in Algorithm \ref{DBLS} recovers the key properties of Algorithm \ref{BLS}: strict decrease of the dual objective and selection of $\alpha=1$ within an error neighborhood.

We proceed by outlining our assumptions.  The standard Lipshitz and strict convexity assumptions regarding the dual Hessian are defined here.
\begin{assumption} \label{standard}
The Hessian $H(\l)$ of the dual function $q(\l)$ satisfies the following conditions 
\begin{list}{}{\setlength{\itemsep  }{2pt} \setlength{\parsep    }{2pt}
                                           \setlength{\parskip  }{0in} \setlength{\topsep    }{2pt}
                                           \setlength{\partopsep}{0in} \setlength{\leftmargin}{10pt}
                                           \setlength{\labelsep }{10pt} \setlength{\labelwidth}{-0pt}}
\item[({\it Lipschitz dual Hessian})] There exists some constant $L>0$ such that
\[\|H(\l)-H(\bar{\l})\| \le L\|\l-\bar{\l}\| \, \forall \l,\bar{\l}\in \rn.\]
\item [({\it Strictly convex dual function})] There exists some constant $M>0$ such that $\|H(\l)^{-1}\|\le M \qquad \, \forall \l\in \rn.$
\end{list}\end{assumption}
In addition to assumptions about the dual Hessian we assume that key properties of the inverse Hessian carry forward to our approximation.
\begin{assumption}
The approximate inverse Hessian remains well conditioned,
\[ m \le \|\bar H^{(N)}\| \le M.\]
within the subspace $\mathbf{1}^\perp$.
\label{cond}
\end{assumption}
These assumptions make sense because $\bar H^{(N)}$ is a truncated sum whose limit as $N$ approaches infinity is $H^{-1}$, a matrix we already assume to be well conditioned on $\mathbf{1}^\perp$ even when solving this problem in the centralized case.  Furthermore the first term in the sum is $D^{-1}$ which is well conditioned by construction.

We begin our analysis by characterizing the stepsize $\alpha$ chosen by Algorithm \ref{DBLS} when the descent direction $d$ is chosen according the the ADD-N method.
\begin{lemma}
For any $\alpha_i$ satisfying the distributed Armijo rule in equation (\ref{armijoN})
with descent direction  $d= -\bar H^{(N)}g$ we have
\[q_i(\lambda+\alpha_i d)- q_i(\lambda)\le 0.\]
\label{neg}\vspace{-5mm}
\end{lemma}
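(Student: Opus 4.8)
The plan is to argue directly from the local Armijo inequality (\ref{armijoN}). Since $\alpha_i$ satisfies (\ref{armijoN}) with $d=-\bar H^{(N)}g$, rearranging the inequality gives
\[q_i(\lambda+\alpha_i d)-q_i(\lambda)\;\le\;\sigma\,\alpha_i\sum_{j\in\mathcal N_i^{(N)}}d^j g^j.\]
Because the backtracking loop in Algorithm \ref{DBLS} only ever multiplies the starting value $\alpha_i=1$ by factors $\beta\in(0,1)$, any accepted $\alpha_i$ lies in $(0,1]$, and $\sigma$ is positive; hence the whole statement reduces to showing that the neighborhood sum $\sum_{j\in\mathcal N_i^{(N)}}d^j g^j$ is nonpositive. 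The first paragraph of the proof is therefore just this routine rearrangement plus the observation $\alpha_i,\sigma>0$.

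For the reduced claim I would substitute $d=-\bar H^{(N)}g$ and use the locality of $\bar H^{(N)}$ from Section \ref{subsection:ADD}. Writing $\bar H^{(N)}=D^{-1/2}\big(\sum_{r=0}^{N}(D^{-1/2}BD^{-1/2})^{r}\big)D^{-1/2}$, the middle factor is a polynomial in the symmetric matrix $D^{-1/2}BD^{-1/2}$, whose eigenvalues lie in $(-1,1]$, so that factor, and hence $\bar H^{(N)}$, is positive semidefinite (consistent with Assumption \ref{cond}). Positive semidefiniteness gives at once the \emph{global} statement $d'g=-g'\bar H^{(N)}g\le 0$. To upgrade this to the neighborhood-restricted sum, I would use that $\bar H^{(N)}$ has exactly the $N$-hop sparsity pattern of the graph, so for $j\in\mathcal N_i^{(N)}$ the component $d^j=-(\bar H^{(N)}g)^j$ is assembled only from entries $g^l$ with $l\in\mathcal N_j^{(N)}$; expanding $\sum_{j\in\mathcal N_i^{(N)}}d^j g^j$ accordingly lets one express it as a controlled (partial) quadratic form in $g$, which I would then show is $\le 0$. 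The sign pattern of the Hessian splitting is available here as well: for the network-flow dual Hessian $[B]_{jl}\ge 0$ off the diagonal, so every term $(D^{-1/2}BD^{-1/2})^{r}$, and thus $\bar H^{(N)}$, has nonnegative off-diagonal entries, which is the kind of structural fact that makes the partial-sum bound go through. Combining $\sum_{j\in\mathcal N_i^{(N)}}d^j g^j\le 0$ with $\sigma,\alpha_i>0$ in the displayed inequality yields $q_i(\lambda+\alpha_i d)-q_i(\lambda)\le 0$.

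The main obstacle is precisely this last passage: semidefiniteness of $\bar H^{(N)}$ only controls the global inner product $d'g$, whereas the lemma needs nonpositivity of a sum restricted to $\mathcal N_i^{(N)}$, and bridging that gap is exactly where the information-dependence structure of ADD-$N$ has to be invoked — keeping track of which entries of $g$ feed into which $d^j$ for $j\in\mathcal N_i^{(N)}$, and how the nonnegative sparsity pattern of $\bar H^{(N)}$ interacts with the restriction to the neighborhood. I expect that bookkeeping, rather than the algebraic rearrangement of (\ref{armijoN}), to be the delicate step.
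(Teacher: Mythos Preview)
Your reduction is right: once you rearrange (\ref{armijoN}) and note $\sigma,\alpha_i>0$, the whole lemma comes down to $\sum_{j\in\mathcal N_i^{(N)}}d^jg^j\le 0$. The gap is in how you propose to obtain that inequality. You treat positive semidefiniteness of $\bar H^{(N)}$ as if it only bounds the full inner product $g'\bar H^{(N)}g$, and then retreat to an entrywise sign argument (nonnegative off-diagonals, $M$-matrix structure) to handle the restricted sum. But $\bar H^{(N)}\succeq 0$ controls $v'\bar H^{(N)}v$ for \emph{every} vector $v$, not just $v=g$; so the task is simply to exhibit the neighborhood sum as a genuine quadratic form in some vector, rather than a ``partial'' one that needs extra structure to sign.

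That is exactly what the paper does, in one line. It introduces the truncated gradient $\tilde g^{(i)}\in\R^n$ defined by $[\tilde g^{(i)}]_j=g^j$ for $j\in\mathcal N_i^{(N)}$ and $[\tilde g^{(i)}]_j=0$ otherwise, and, invoking the $N$-hop sparsity of $\bar H^{(N)}$ that you already cite, writes
\[
\sum_{j\in\mathcal N_i^{(N)}} d^j g^j \;=\; -\bigl(\tilde g^{(i)}\bigr)'\bar H^{(N)}\,\tilde g^{(i)}\;\le\;0.
\]
No bookkeeping over which $g^l$ feed into which $d^j$ is performed, and no appeal to sign patterns of the entries appears. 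Your proposed route through nonnegative off-diagonals is both unnecessary for the paper's argument and, as you state it, not a proof: entrywise nonnegativity of the off-diagonal part of $\bar H^{(N)}$ does not by itself force a sum of the form $\sum_{j\in S}g^j(\bar H^{(N)}g)^j$ to have a definite sign, since the components $g^j$ may carry either sign. The missing idea is the truncated-vector reformulation, after which positive semidefiniteness does all the work.
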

\begin{proof}
Recall that $\bar H ^{(N)}$ is non-zero only for elements corresponding to $N$-hop neighbors by construction. Therefore, by defining the local gradient vector $\tilde g^{(i)}$ as a sparse vector with nonzero elements $[\tilde g^{(i)}]_j=g^j$ for $j\in \mathcal{N}_i^{(N)}$ we can write
\begin{equation}\sum_{j\in \mathcal{N}_i^{(N)}} {d^j g^j} = -\left(\tilde g^{(i)}\right)' \bar H ^{(N)}\tilde g^{(i)}\label{quad}\end{equation}
Because $\bar H^{(N)}$ is positive definite the right hand side of (\ref{quad}) is nonpositive from where it follows that $\sum_{j\in \mathcal{N}_i^{(N)}} {d^j g^j}\leq0 $. The desired result follows by noting that $\alpha_i$ and $\sigma$ are positive scalars.\end{proof}

Lemma \ref{neg} tells us that when using the distributed backtracking line search with the ADD-N algorithm, we achieve improvement in each element of the decomposed objective $q_i(\lambda)$.  From the quadratic form in equation (\ref{quad}) it also follows that if equation (\ref{armijoN}) is satisfied by a stepsize $\alpha_i$, then it is also satisfied by any $\alpha\le \alpha_i$ and in particular $\alpha= \min_i \alpha_i$ satisfies equation (\ref{armijoN}) for all $i$.

%

\subsection{Unit Stepsize Phase}

A fundamental property of the backtracking line search using Armijo's rule summarized in Algorithm \ref{BLS}is that it always selects $\alpha=1$ when iterates $\lambda$ are within a neighborhood of the optimal argument. This property is necessary to ensure quadratic convergence of Newton's method and is therefore a desirable property for the distributed line search summarized in Algorithm \ref{DBLS}. We prove here that this is true as stated in the following theorem.

\begin{theorem}
Consider the distributed line search in Algorithm \ref{DBLS} with parameter $N$, starting point $\lambda=\lambda_k$, and descent direction $d = d_k^{(N)} = -\bar H_k^{(N)} g_k$ computed by the ADD-$N$ algortihm [cf. \eqref{ADDd} and \eqref{ADD_h}. If the search parameter $\sigma$ is chosen such that
\[ \sigma\in \left(0, \frac{1-\bar\rho^{N+1}}{2}\right) \]
and the norm of the dual gradient satisfies
\[\|g_k\|\le\frac{3m}{LM^3}\left({1-\bar\rho^{N+1}}-2\sigma\right), \]
then Algorithm \ref{DBLS} selects stepsize $\alpha=1$.
\label{a1}
\end{theorem}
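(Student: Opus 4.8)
The plan is to mimic the classical argument (Boyd--Vandenberghe §9.5) that shows Newton's method with backtracking eventually takes unit steps, but carried out locally on each $q_i$ rather than globally on $q$. Fix a node $i$ and consider the scalar function $\psi_i(\alpha) := q_i(\lambda+\alpha d) - q_i(\lambda) - \sigma\alpha \sum_{j\in\mathcal{N}_i^{(N)}} d^j g^j$; it suffices to show $\psi_i(1)\le 0$ whenever the hypotheses hold, since then node $i$ never backtracks past $\alpha_i=1$, and by Lemma~\ref{neg} (together with the monotonicity remark after it) $\alpha=\min_i\alpha_i=1$. First I would Taylor-expand $q_i(\lambda+\alpha d)$ around $\lambda$ to second order with a Lipschitz remainder: using that the second derivative of $q_i$ along $d$ differs from $\tfrac12 d' H_i d$ (with $H_i$ the rows of the Hessian touching node $i$) by at most $\tfrac{L}{6}\|d\|^3$ — here the Lipschitz-Hessian assumption enters — I get $q_i(\lambda+\alpha d)-q_i(\lambda) \le \alpha\, \nabla q_i(\lambda)'d + \tfrac{\alpha^2}{2} d'H_i d + \tfrac{L}{6}\alpha^3\|d\|^3$. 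Summing the gradient term over $i$ recovers $d'g$, and summing the Hessian contribution recovers $d'Hd$; the localized version of this is exactly what the $N$-hop neighborhood bookkeeping in Lemma~\ref{neg} is set up to handle, so the per-node terms aggregate correctly.

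Next I would substitute $d = -\bar H^{(N)}g$ and $\alpha=1$. The gradient term becomes $-g'\bar H^{(N)}g$ and the curvature term becomes $\tfrac12 g'\bar H^{(N)} H \bar H^{(N)} g$. The key algebraic fact — and this is where the connectivity coefficient $\bar\rho$ must appear — is a bound of the form $g'\bar H^{(N)} H \bar H^{(N)} g \le (1+\bar\rho^{N+1})\, g'\bar H^{(N)} g$, i.e. $\bar H^{(N)} H \bar H^{(N)} \preceq (1+\bar\rho^{N+1})\bar H^{(N)}$ on $\mathbf{1}^\perp$. This should follow from the series definition \eqref{ADD_h}: writing $\bar H^{(N)} H = I - (D^{-1/2}BD^{-1/2})^{N+1}$ (telescoping the truncated Neumann series against $H=D-B$), one sees $\bar H^{(N)}H\bar H^{(N)} = \bar H^{(N)} - (D^{-1/2}BD^{-1/2})^{N+1}\bar H^{(N)}$, and the spectral-radius bound \eqref{rhobar} controls the correction term by $\bar\rho^{N+1}$ times something comparable to $\bar H^{(N)}$. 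Combining, the $\alpha=1$ decrease is at most $-g'\bar H^{(N)}g + \tfrac12(1+\bar\rho^{N+1}) g'\bar H^{(N)}g + \tfrac{L}{6}\|\bar H^{(N)}g\|^3 = -\tfrac12(1-\bar\rho^{N+1})\,g'\bar H^{(N)}g + \tfrac{L}{6}\|\bar H^{(N)}g\|^3$, while the Armijo right-hand side contributes $-\sigma\, g'\bar H^{(N)}g$. So $\psi_i(1)\le 0$ reduces to $\tfrac{L}{6}\|\bar H^{(N)}g\|^3 \le \big(\tfrac{1-\bar\rho^{N+1}}{2}-\sigma\big)\, g'\bar H^{(N)}g$.

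Finally I would close the estimate using Assumption~\ref{cond}: bound $\|\bar H^{(N)}g\|\le M\|g\|$ on the numerator side and $g'\bar H^{(N)}g \ge m\|g\|^2$ on the denominator side (on $\mathbf{1}^\perp$, where $g$ lives since $\mathbf{1}'g = \mathbf{1}'(Ax-b)=0$). This turns the required inequality into $\tfrac{L}{6}M^3\|g\|^3 \le \big(\tfrac{1-\bar\rho^{N+1}}{2}-\sigma\big) m\|g\|^2$, i.e. $\|g\| \le \tfrac{3m}{LM^3}(1-\bar\rho^{N+1}-2\sigma)$, which is precisely the hypothesis. The positivity of the right-hand side is guaranteed by $\sigma < (1-\bar\rho^{N+1})/2$. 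I expect the main obstacle to be the curvature bound in the second paragraph: making the telescoping identity $\bar H^{(N)}H = I-(D^{-1/2}BD^{-1/2})^{N+1}$ precise on $\mathbf{1}^\perp$ (the Hessian is singular in the $\mathbf{1}$ direction, so all these manipulations must be restricted to the orthogonal complement), and converting the spectral-radius bound $\rho(BD^{-1})\le\bar\rho$ into a clean operator inequality of the symmetrized form $D^{-1/2}BD^{-1/2}\preceq \bar\rho\, I$ on $\mathbf{1}^\perp$, are the steps where care is needed; everything else is routine Taylor estimation and substitution.
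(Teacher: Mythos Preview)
Your high-level strategy---Taylor expand $q_i$ to second order with a Lipschitz cubic remainder, substitute $d=-\bar H^{(N)}g$, control the curvature term via a $1+\bar\rho^{N+1}$ bound, and close with Assumption~\ref{cond}---is exactly the skeleton of the paper's proof. But there is a genuine gap in how you pass between local and global quantities.

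You write the Taylor remainder as $\tfrac{L}{6}\alpha^3\|d\|^3$ with the \emph{global} $d$, and you then compare it against the \emph{global} quadratic $g'\bar H^{(N)}g$. That global comparison does reduce to the stated threshold on $\|g\|$, but it is not what you need: the exit condition \eqref{armijoN} is per-node, and its right-hand side is the \emph{local} quantity $\sum_{j\in\mathcal N_i^{(N)}}d^jg^j=-(\tilde g^{(i)})'\bar H^{(N)}\tilde g^{(i)}$, which at a given node $i$ can be arbitrarily small relative to $g'\bar H^{(N)}g$. With a global cubic on the left and a local quadratic on the right, the inequality can fail even when $\|g\|$ meets the threshold. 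Your sentence about ``the per-node terms aggregate correctly'' via Lemma~\ref{neg} does not rescue this: summing over $i$ would only recover the \emph{centralized} Armijo rule, not the per-node rule that Algorithm~\ref{DBLS} actually checks.

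The paper closes this gap by first observing that $q_i$ depends on $\lambda$ only through the neighborhood coordinates, so $q_i(\lambda+\alpha d)=q_i(\lambda+\alpha\tilde d^{(i)})$ where $\tilde d^{(i)}:=\bar H^{(N)}\tilde g^{(i)}$ and $\tilde g^{(i)}$ is $g$ restricted to $\mathcal N_i^{(N)}$. All subsequent estimates---the Lipschitz remainder, the curvature term, the conditioning bounds---are then carried out with $\tilde g^{(i)}$ and $\tilde d^{(i)}$ in place of $g$ and $d$, giving a cubic $\tfrac{L}{6}\|\tilde d^{(i)}\|^3$ that \emph{is} controlled by the local quadratic $m\|\tilde g^{(i)}\|^2$. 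Only at the very end does one invoke $\|\tilde g^{(i)}\|\le\|g\|$ to connect to the hypothesis. Your argument becomes correct once you insert this localization; without it the per-node conclusion does not follow.

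A secondary remark on the curvature step: the paper does not prove the operator inequality $\bar H^{(N)}H\bar H^{(N)}\preceq(1+\bar\rho^{N+1})\bar H^{(N)}$ that you propose. It instead bounds $(\tilde d^{(i)})'H\tilde d^{(i)}$ by Cauchy--Schwarz and the cited norm bound $\|H\bar H^{(N)}\|\le 1+\bar\rho^{N+1}$ from \cite{acc11}, which yields $M(1+\bar\rho^{N+1})\|\tilde g^{(i)}\|^2$ rather than $(1+\bar\rho^{N+1})(\tilde g^{(i)})'\bar H^{(N)}\tilde g^{(i)}$. Your telescoping computation gives $\bar H^{(N)}H=D^{-1/2}(I-S^{N+1})D^{1/2}$ with $S=D^{-1/2}BD^{-1/2}$, so the $D^{\pm1/2}$ conjugation prevents the clean operator inequality you wrote; the norm bound survives (by similarity of spectra), which is what the paper uses.
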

\begin{proof}
Recall the definition of the local gradient $\tilde g^{(i)}_k$ as the sparse vector with nonzero elements $[\tilde g^{(i)}_k]_j=g_k^j$ for $j\in \mathcal{N}_i^{(N)}$. Further define the local update vector $\tilde d_k^{(i)} := \bar H^{(N)}_k \tilde g^{(i)}_k$ whose sparsity pattern is the same as that of $\tilde g^{(i)}_k$. Due to this and to the fact that the local objective $q_i(\lambda)$ in \eqref{ldep} depends only on values in $\mathcal{N}_i^{(N)}$, we have
\begin{align}\label{eqn_theo_unit_step_size_pf_10}
    q_i(\lambda_k+\alpha d_k) = q_i(\lambda_k+\alpha\tilde d_k^{(i)}).
\end{align}
Applying the Lipschitz dual Hessian assumption to the local update vector $\tilde d_k^{(i)}$ we get
\begin{align}\label{eqn_theo_unit_step_size_pf_20}
    \Vert H(\lambda_k+\alpha \tilde d_k^{(i)})- H(\lambda_k)\Vert \le
        \alpha L \Vert \tilde d_k^{(i)}\Vert.
\end{align}
We further define a reduced Hessian $\nabla^2 q_i(\lambda) = \tilde H^{(i)}$ by setting to zero the rows and columns corresponding to nodes outside of the neighborhood $\mathcal{N}_i^{(N)}$, i.e.,
\begin{equation}
    \left[\tilde H^{(i)}\right]_{ij} :=
        \left\{ \begin{array}{ll} H_{ij} & i,j\in \mathcal{N}_i^{(N)}\\
                                  0 & else\end{array}\right. \label{RH}
\end{equation}
Since the elements of $H$ already satisfy $H_{ij}= 0$ for all ${i,j}\not\in \mathcal{E}$ the resulting $\tilde H^{(i)}$ has the structure of a principal submatrix of $H$ with the deleted rows left as zeros. Since the norm $\Vert H(\lambda_k+\alpha \tilde d_k^{(i)})- H(\lambda_k)\Vert$ in \eqref{eqn_theo_unit_step_size_pf_20} is the maximum eigenvalue modulus of the matrix $H(\lambda_k+\alpha \tilde d_k^{(i)})- H(\lambda_k)$, it is larger than the norm $\Vert \tilde H^{(i)}(\lambda_k+\alpha \tilde d_k^{(i)})- \tilde H^{(i)}(\lambda_k)\Vert$ because the latter is the maximum over a subset of the eigenvalues of the former. Combining this observation with \eqref{eqn_theo_unit_step_size_pf_20} yields
\begin{equation}\Vert \tilde H^{(i)}(\lambda_k+\alpha \tilde d_k^{(i)})- \tilde H^{(i)}(\lambda_k)\Vert\le \alpha L\Vert \tilde d_k^{(i)}\Vert.\label{LL}\end{equation}
Interpret now the update in \eqref{eqn_theo_unit_step_size_pf_10} as a function of $\tilde q_i(\alpha)$ defined as
\begin{equation}\label{eqn_theo_unit_step_size_pf_50}
    \tilde q_i(\alpha) := q_i(\lambda_k+\alpha\tilde d_k^{(i)}).
\end{equation}
Differentiating with respect to $\alpha$ and using the definition of the local gradient $\tilde g^{(i)}_k$ we get the derivative of $\tilde q_i(\alpha)$ as
\begin{equation}\label{eqn_theo_unit_step_size_pf_60}
    \tilde q_i'(\alpha)
        = \nabla q_i (\lambda_k+\alpha\tilde d_k^{(i)}) \tilde d_k^{(i)}
        = \tilde g^{(i)}(\lambda_k+\alpha\tilde d_k^{(i)}) \tilde d_k^{(i)}.
\end{equation}
Differentiating with respect to $\alpha$ a second time and using the definition of $\tilde H^{(i)}$ in \eqref{RH} yields
\begin{align}\label{eqn_theo_unit_step_size_pf_70}
    \tilde q_i''(\alpha)
       &=\ \tilde d_k^{(i)}\phantom{}' \nabla^2 q_i(\lambda_k+\alpha\tilde d_k^{(i)})\tilde d_k^{(i)}\nonumber\\
       &=\ \tilde d_k^{(i)}\phantom{}' \tilde H^{(i)}(\lambda_k+\alpha\tilde d_k^{(i)})\tilde d_k^{(i)}.
\end{align}
Return now to (\ref{LL}) and replace the matrix norm on the right hand side with left and right multiplication by the unit vector $\tilde d_k^{(i)}/\|\tilde d_k^{(i)}\|$. This yields
\begin{align}\label{eqn_theo_unit_step_size_pf_80}
    \tilde d_k^{(i)}\phantom{}'
        \left[\tilde H^{(i)}(\lambda_k+\alpha \tilde d_k^{(i)})\!- \tilde H^{(i)}(\lambda_k)\right]
            \tilde d_k^{(i)}
    \!\le \alpha L\Vert \tilde d_k^{(i)}\Vert^3.
\end{align}
Comparing the expressions for the derivatives $\tilde q_i''(\alpha)$ in \eqref{eqn_theo_unit_step_size_pf_70} with the left hand side of \eqref{eqn_theo_unit_step_size_pf_80} we can simplify the latter to
\[\tilde q_i''(\alpha)-\tilde q_i''(0)\le \alpha L\Vert \tilde d_k^{(i)}\Vert^3.\]
Integrating the above expression with respect to $\alpha$ results in
\[\tilde q_i'(\alpha)-\tilde q_i'(0)\le \frac{\alpha^2}{2} L\Vert \tilde d_k^{(i)}\Vert^3+\alpha\tilde q_i''(0),\]
which upon a second integration with respect to $\alpha$ yields
\[\tilde q_i(\alpha)-\tilde q_i(0)\le \frac{\alpha^3}{6} L\Vert \tilde d_k^{(i)}\Vert^3+\frac{\alpha^2}{2}\tilde q_i''(0)+\alpha\tilde q_i'(0).\]
Since we are interested in unit stepsize substitute $\alpha=1$ and the definitions of the derivatives $\tilde q_i'(0)$ and $\tilde q_i''(0)$ given in \eqref{eqn_theo_unit_step_size_pf_60} and \eqref{eqn_theo_unit_step_size_pf_70} to get
\begin{equation*}
    \tilde q_i(1)-\tilde q_i(0)\le
         \frac{L}{6} \Vert \tilde d_k^{(i)}\Vert^3 \!
       + \frac{1}{2} \tilde d_k^{(i)}\phantom{}'\tilde H^{(i)}(\lambda_k)\tilde d_k^{(i)}
       + \tilde g_k^{(i)}\phantom{}' \tilde d_k^{(i)}.
\end{equation*}
Since according to (\ref{RH}) the reduced Hessian $\tilde H^{(i)}$ has the structure of a principal submatrix of the Hessian $H$ and $H\succeq 0$ it follows that $0\preceq\tilde H^{(i)}\preceq H$ and that as a consequence \[\tilde d_k^{(i)}\phantom{}' \tilde H^{(i)}(\lambda_k)\tilde d_k^{(i)} \le \tilde d_k^{(i)}\phantom{}' H_k \tilde d_k^{(i)}.\]
Incorporating this latter relation and the definition of the local update $\tilde d_k^{(i)} = \bar H^{(N)}_k \tilde g^{(i)}_k$ in the previous equation we obtain
\begin{align}\label{eqn_theo_unit_step_size_pf_140}
   \tilde q_i(1)-\tilde q_i(0)\le & \
        \frac{L}{6}\Vert \bar H^{(N)}_k \tilde g^{(i)}_k\Vert^3 \\&\hspace{-4mm}\nonumber
      + \frac{1}{2}\left( \bar H^{(N)}_k \tilde g^{(i)}_k\right)' H_k\bar H^{(N)}_k \tilde g^{(i)}_k
      - \tilde g_k^{(i)}\phantom{}' \bar H^{(N)}_k \tilde g^{(i)}_k.
\end{align}
Consider now the last term in the right hand side and recall the sparsity pattern of the local gradient $\tilde g_k^{(i)}$ to write
\begin{align}\label{eqn_theo_unit_step_size_pf_150}
    -\tilde g_k^{(i)} \bar H^{(N)}_k \tilde g^{(i)}_k = \sum_{j\in \mathcal{N}_i^{(N)}} g_k^j d_k^j,
\end{align}
and further split the right hand side of \eqref{eqn_theo_unit_step_size_pf_150} to generate suitable structure
\begin{align}\label{eqn_theo_unit_step_size_pf_160}
    \sum_{j\in \mathcal{N}_i^{(N)}} g_k^j d_k^j
        = \sum_{j\in \mathcal{N}_i^{(N)}} \sigma{g_k^j d_k^j} + (1-\sigma){g_k^j d_k^j}.
\end{align}
Substitute now \eqref{eqn_theo_unit_step_size_pf_160} into \eqref{eqn_theo_unit_step_size_pf_150} and the result into \eqref{eqn_theo_unit_step_size_pf_140} to write
\begin{align*}
    \tilde q_i(1)-\tilde q_i(0)\le \
         \frac{L}{6}&\Vert \bar H^{(N)}_k \tilde g^{(i)}_k\Vert^3
       + \frac{1}{2}\tilde d_k^{(i)}\phantom{}' H\tilde d_k^{(i)}\\&
       + \sigma \sum_{j\in \mathcal{N}_i^{(N)}} {g_k^j d_k^j}
       + (1-\sigma)\sum_{j\in \mathcal{N}_i^{(N)}}{g_k^j d_k^j}.
\end{align*}
Using the expression for the quadratic form in (\ref{eqn_theo_unit_step_size_pf_150}) to substitute the last term in the previous equation yields
\begin{align}\label{terms}
    \tilde q_i(1)-\tilde q_i(0)\le \
          \frac{L}{6}&\Vert\bar H_k ^{(N)}\tilde g_k^{(i)}\Vert^3
         +\frac{1}{2}\tilde d_k^{(i)}\phantom{}' H\tilde d_k^{(i)}\\&
         +\sigma \sum_{j\in \mathcal{N}_i^{(N)}} {g_k^j d_k^j}
         -(1-\sigma)\tilde g^{(i)}_k\phantom{}' \bar H_k ^{(N)}\tilde g_k^{(i)}\nonumber
\end{align}
Further note that from the definition of $\tilde d^{(i)}$ it follows that
\[\tilde d_k^{(i)}\phantom{}' H_k\tilde d_k^{(i)} = \tilde g_k^{(i)}\phantom{}'\bar H_k^{(N)} H_k\bar H_k^{(N)}\tilde g_k^{(i)}.\]
The right hand side of this latter equality can be bounded using Cauchy-Schwarz's inequality and the submultiplicity of matrix norms as
\[\tilde g_k^{(i)}\phantom{}'\bar H_k^{(N)} H\bar H_k^{(N)}\tilde g_k^{(i)}\le \|\tilde g_k^{(i)}\|\, \| \bar H_k^{(N)}\|\, \|H_k \bar H_k^{(N)}\|\, \|\tilde g_k^{(i)}\|.\]
The norm $H_k \bar H_k^{(N)}$ can be further bounded using the result $\|H_k\bar H^{(N)}_k\|\le \bar \rho^{N+1}+1$ from \cite{acc11}. The norm $\|\bar H^{(N)}_k\|$ can be bounded as $\|\bar H^{(N)}_k\|\le M$ according to Assumption \ref{cond}. These two observations substituted in the last displayed equation yield
\begin{equation}\tilde d_k^{(i)}\phantom{}' H_k\tilde d_k^{(i)} \le M(\rho^{N+1}+1)\|\tilde g_k^{(i)}\|^2. \label{term2}\end{equation}
Applying the bound $\|\bar H^{(N)}_k\|\le M$ from Assumption 2 to the norm $\Vert\bar H_k ^{(N)}\tilde g_k^{(i)}\Vert^3$ we get $\Vert\bar H_k ^{(N)}\tilde g_k^{(i)}\Vert^3 \le M^3 \|\tilde g_k^{(i)}\|^3$.  Since Assumption 2 also guarantees that $\|\bar H^{(N)}_k\|\ge m$, we have \[\frac{\tilde g_k^{(i)}\phantom{}'\bar H_k^{(N)} \tilde g_k^{(i)}}{\|\tilde g_k^{(i)}\|^2} \ge m.\]  Therefore, we can write
\begin{equation}
\Vert\bar H_k ^{(N)}\tilde g_k^{(i)}\Vert^3 \le \frac{M^3}{m}\|\tilde g_k^{(i)}\|\ \tilde g_k^{(i)}\phantom{}'\bar H_k^{(N)} \tilde g_k^{(i)}.
\label{term1}
\end{equation}
Substituting the relations (\ref{term2}) and (\ref{term1}) in relation (\ref{terms}) and factoring we get
\begin{align*}
   & \tilde q_i(1)-\tilde q_i(0)\le \
          \sigma\sum_{j\in \mathcal{N}_i^{(N)}} {g_k^j d_k^j}\\&\hspace{3mm}
        + \tilde g^{(i)}_k\phantom{}' \bar H_k ^{(N)}\tilde g_k^{(i)}
              \left[-(1-\sigma)+\frac{LM^3}{6m}\Vert \tilde g_k^{(i)}\Vert+ \frac{\bar\rho^{N+1}}{2} + 1\right].
\end{align*}
Use $\Vert \tilde g_k^{(i)}\Vert\le \|g_k\|\le {6m}/({LM^3})(({1-\bar\rho^{N+1}})/{2}-\sigma)$ to write
\begin{align*}
  & \tilde q_i(1)-\tilde q_i(0)\le
        \sigma\sum_{j\in \mathcal{N}_i^{(N)}} {g_k^j d_k^j} \\&
      + M \Vert \tilde g_k^{(i)}\Vert^2
         \left[-(1-\sigma)
            +\left(\frac{1-\bar\rho^{N+1}}{2}-\sigma\right)
            +\frac{\bar\rho^{N+1}+1}{2} \right].
\end{align*}
Algebraic simplification of the bracketed portion yields
\begin{equation}\left[-(1-\sigma)+\left(\frac{1-\bar\rho^{N+1}}{2}-\sigma\right)+\frac{\bar\rho^{N+1}+1}{2} \right]=0\label{cancel}.\end{equation}
Thus we have \[\tilde q_i(1)-\tilde q_i(0)\le \sigma\sum_{j\in \mathcal{N}_i^{(N)}} {g_k^j d_k^j}.\]
Substituting the definition of $\tilde q_i(\lambda)$ in \eqref{eqn_theo_unit_step_size_pf_50} into this equation we arrive at
\[q_i\left(\lambda_{k}+d_k^{(i)}\right)\le q_i(\lambda_k)+\sigma\sum_{j\in \mathcal{N}_i^{(N)}} {d_k^j g_k^j}\]
which means that the exit condition \eqref{armijoN} in Algorithm \ref{DBLS} is met with $\alpha=1$.
\end{proof}
Theorem \ref{a1} guarantees that for an appropriately chosen line search parameter $\sigma$ the local backtracking line search will always choose a step size of $\alpha=1$ once the norm of the dual gradient becomes small.  Furthermore, the condition on the line search parameter tells us that $\bar\rho$ and our choice of $N$ fully capture the impact of distributing the line search. The distributed Armijo rule requires $\left(1-\bar\rho^{N+1}-2\sigma\right)>0$ while the standard Armijo rule requires $(1-2\sigma)>0$.  It is clear that in the limit $N\rightarrow \infty$ these conditions become the same with a rate controlled by $\bar\rho$.

\subsection{Strict Decrease Phase}
A second fundamental property of the backtracking line search with the Armijo rule is that there is a strict decrease in the objective when iterates are outside of an arbitrary noninfinitesimal neighborhood of the optimal solution. This property is necessary to ensure global convergence of Newton's algorithm as it ensures the quadratic convergence phase is eventually reached. Our goal here is to prove that this strict decrease can be also achieved using the distributed backtracking line search specified by Algorithm \ref{DBLS}.

Traditional analysis of the centralized backtracking line search of Algorithm \ref{BLS} leverages a lower bound on the stepsize $\alpha$ to prove strict decrease. We take the same approach here and begin by finding a global lower bound on the stepsize $\hat\alpha\le\alpha_i$ that holds for all nodes $i$. We do this in the following lemma.
\begin{lemma}
Consider the distributed line search in Algorithm \ref{DBLS} with parameter $N$, starting point $\lambda=\lambda_k$, and descent direction $d = d_k^{(N)} = -\bar H_k^{(N)} g_k$ computed by the ADD-$N$ algortihm [cf. \eqref{ADDd} and \eqref{ADD_h}. The stepsize
\[\hat \alpha = {2(1-\sigma)}\frac{m^2}{M^2}\]
satisfies the local Armijo rule in \eqref{armijoN}, i.e.,
\[q_i(\lambda_{k+1})\le q_i(\lambda_k)+\sigma  \hat{\alpha}\sum_{j\in n_i^{(N)}} {d_k^j g_k^j}\]
for all network nodes $i$ and all $k$.
\label{ahat}
\end{lemma}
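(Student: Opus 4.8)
The plan is to reproduce, at the level of each local objective, the classical argument that backtracking line search never backs off below a fixed stepsize (see, e.g., \cite[Section 9.5]{boydbook}), using the localization already developed in the proof of Theorem~\ref{a1}. Fix a node $i$ and an iteration $k$. As there, introduce the local gradient $\tilde g^{(i)}_k$ (the restriction of $g_k$ to the coordinates of $\mathcal{N}_i^{(N)}$) and the corresponding local update vector $\tilde d^{(i)}_k$ obtained from $d_k=-\bar H^{(N)}_k g_k$ in the same way. Because $\bar H^{(N)}_k$ is supported on $N$-hop neighborhoods and $q_i$ depends only on the variables in $\mathcal{N}_i^{(N)}$, one has the reduction $q_i(\lambda_k+\alpha d_k)=q_i(\lambda_k+\alpha \tilde d^{(i)}_k)=:\tilde q_i(\alpha)$ exactly as in \eqref{eqn_theo_unit_step_size_pf_10}. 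From the derivative computations in the proof of Theorem~\ref{a1}, together with \eqref{quad}, we get $\tilde q_i'(0)=\sum_{j\in\mathcal{N}_i^{(N)}} d_k^j g_k^j=-(\tilde g^{(i)}_k)'\bar H^{(N)}_k \tilde g^{(i)}_k\le 0$, while $\tilde q_i''(\alpha)$ is the quadratic form in $\tilde d^{(i)}_k$ of the reduced Hessian $\tilde H^{(i)}$ of \eqref{RH} evaluated at $\lambda_k+\alpha \tilde d^{(i)}_k$.

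Next I would bound $\tilde q_i$ from above by a quadratic in $\alpha$. Since $\tilde H^{(i)}(\lambda)$ is a (zero-padded) principal submatrix of $H(\lambda)$, its spectral norm is at most $\|H(\lambda)\|$; using the uniform spectral bound on the dual Hessian that accompanies the well-conditioning hypotheses, namely $\|H(\lambda)\|\le 1/m$, gives $\tilde q_i''(\alpha)\le \frac1m\|\tilde d^{(i)}_k\|^2$ for every $\alpha$. Integrating this twice from the origin yields, for $\alpha\in[0,1]$,
\[ \tilde q_i(\alpha)-\tilde q_i(0)\ \le\ \alpha\,\tilde q_i'(0)+\frac{\alpha^2}{2m}\,\|\tilde d^{(i)}_k\|^2 . \]
This is the step where the present argument must depart from that of Theorem~\ref{a1}: there one could keep the cubic term coming from the Lipschitz dual Hessian \eqref{LL}, because $\|g_k\|$ was assumed small, whereas Lemma~\ref{ahat} is asserted for \emph{every} $k$ with no smallness hypothesis, so the remainder has to be genuinely second order in $\alpha$, which forces the use of a global bound on $\|H\|$ rather than of the Lipschitz estimate.

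The rest is a short substitution using Assumption~\ref{cond}. We have $\|\tilde d^{(i)}_k\|=\|\bar H^{(N)}_k\tilde g^{(i)}_k\|\le M\|\tilde g^{(i)}_k\|$, and, by \eqref{quad} again, $-\tilde q_i'(0)=(\tilde g^{(i)}_k)'\bar H^{(N)}_k\tilde g^{(i)}_k\ge m\|\tilde g^{(i)}_k\|^2$, hence $\|\tilde g^{(i)}_k\|^2\le -\tilde q_i'(0)/m$. Plugging both into the displayed bound gives $\tilde q_i(\alpha)-\tilde q_i(0)\le \alpha\,\tilde q_i'(0)\big(1-\frac{M^2}{2m^2}\,\alpha\big)$. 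Evaluating at $\alpha=\hat\alpha=2(1-\sigma)m^2/M^2$ makes the bracket equal to $\sigma$; since $\tilde q_i'(0)\le 0$ this produces $\tilde q_i(\hat\alpha)-\tilde q_i(0)\le \sigma\hat\alpha\,\tilde q_i'(0)=\sigma\hat\alpha\sum_{j\in\mathcal{N}_i^{(N)}} d_k^j g_k^j$, which, rewritten through $\tilde q_i(\hat\alpha)=q_i(\lambda_k+\hat\alpha d_k)$, is precisely the local Armijo rule \eqref{armijoN} at $\alpha=\hat\alpha$. No bound used depended on $i$ or $k$, so this holds for all nodes and all iterations.

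I expect the main obstacle to be exactly the quadratic bound of the second paragraph: one must be confident that a stepsize-independent bound on the (reduced) Hessian along the search ray is genuinely available from the stated hypotheses — this is the one ingredient not explicitly displayed in Assumptions~\ref{standard}--\ref{cond}, and it is the natural companion of the lower bound $\|\bar H^{(N)}\|\ge m$ — since without it the Lipschitz-Hessian route reintroduces a cubic term that cannot be controlled uniformly in $k$; everything afterward is the elementary algebra above. A secondary point, but one already handled in Theorem~\ref{a1}, is the localization identity $q_i(\lambda_k+\alpha d_k)=q_i(\lambda_k+\alpha\tilde d^{(i)}_k)$ and the accounting of which coordinates of $d_k$ actually feed $q_i$. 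Finally, I would record the consequence used in the strict-decrease theorem: by the monotonicity remark following Lemma~\ref{neg}, every $\alpha\le\hat\alpha$ also satisfies \eqref{armijoN}, so the backtracking loop of Algorithm~\ref{DBLS} terminates at each node with $\alpha_i\ge\min\{1,\beta\hat\alpha\}$.
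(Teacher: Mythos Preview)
Your proposal is correct and follows essentially the same route as the paper. The paper invokes Taylor's theorem with mean-value remainder to get the quadratic upper bound $q_i(\lambda_k+\alpha\tilde d_k^{(i)})\le q_i(\lambda_k)+\alpha\tilde g_k^{(i)}{}'\tilde d_k^{(i)}+\tfrac{\alpha^2}{2m}\|\tilde d_k^{(i)}\|^2$, whereas you obtain the identical inequality by integrating $\tilde q_i''(\alpha)\le\frac{1}{m}\|\tilde d_k^{(i)}\|^2$ twice; after that, the paper splits off the $\sigma$-portion of the linear term before applying the bounds $m\le\|\bar H_k^{(N)}\|\le M$, while you bound first and factor as $\alpha\tilde q_i'(0)\bigl(1-\tfrac{M^2}{2m^2}\alpha\bigr)$ --- the same cancellation at $\alpha=\hat\alpha$ results either way. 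Your flagging of the implicit hypothesis $\|H(\lambda)\|\le 1/m$ is apt: the paper uses it too (phrased as ``$\|H^{-1}\|>m$ from Assumption~\ref{cond}''), so this is a shared assumption rather than a gap peculiar to your argument.
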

\begin{proof}
From the mean value theorem centered at $\lambda_k$ we can write the dual function's value as
\[q_i(\lambda_k+\alpha \tilde d_k^{(i)}) = q_i(\lambda_k)+\alpha\tilde g_k^{(i)}\phantom{}' \tilde d_k^{(i)}+\frac{\alpha^2}{2} \tilde d_k^{(i)}\phantom{}' \tilde H^{(i)} (z) \tilde d_k^{(i)}\]
where the vector $z= \lambda_k + t\alpha\tilde d_k^{(i)}$ for some $t\in (0,1)$; see e.g.,\cite[Section 9.1]{boydbook}.  We use the relation $0\preceq\tilde H^{(i)}\preceq H$ and the bound $\|H^{-1}\|>m$ from Assumption \ref{cond} to transform this equality into the bound
\[q_i(\lambda_k+\alpha \tilde d_k^{(i)}) \le q_i(\lambda_k)+\alpha\tilde g_k^{(i)}\phantom{}' \tilde d_k^{(i)}+\frac{\alpha^2}{2 m} \Vert\tilde d_k^{(i)}\Vert^2.\]
Introduce now a splitting of the term $\alpha\tilde g_k^{(i)} \tilde d_k^{(i)}$ to generate convenient structure
\begin{align*}
    q_i(\lambda_k+\alpha \tilde d_k^{(i)}) \le\ &
          q_i(\lambda_k)\\& \hspace{-8mm}
        +\alpha\sigma\tilde g_k^{(i)}\phantom{}' \tilde d_k^{(i)}
        + \alpha(1-\sigma)\tilde g_k^{(i)}\phantom{}' \tilde d_k^{(i)}
        +\frac{\alpha^2}{2 m}\Vert\tilde d_k^{(i)}\Vert^2.
\end{align*}
Further apply the definition of  the local update vector $\tilde d_k^{(i)} := \bar H^{(N)}_k \tilde g^{(i)}_k$ and use the well-conditioning of the approximate inverse Hessian $\bar H_k^{(N)}$ as per Assumption \ref{cond} to claim that $m\le \|\bar H_k^{(N)}\|\le M$ and obtain
\begin{align*}
   q_i(\lambda_k+\alpha \tilde d_k^{(i)}) & \le \
          q_i(\lambda_k) \\& \hspace{-12mm}
        +\alpha\sigma\tilde g_k^{(i)}\phantom{}' \tilde d_k^{(i)}
        - \alpha(1-\sigma)m\|\tilde g_k^{(i)}\|^2
        + \frac{\alpha^2M^2}{2 m}  \Vert\tilde g_k^{(i)}\Vert^2.
\end{align*}
Factoring common terms in this latter equation yields
\begin{align*}
    q_i(\lambda_k+\alpha \tilde d_k^{(i)}) & \le \
        q_i(\lambda_k)\\&\hspace{-8mm}
      + \alpha\sigma\tilde g_k^{(i)}\phantom{}' \tilde d_k^{(i)}
      +\alpha m\|\tilde g_k^{(i)}\|^2 \left[-(1-\sigma)+\frac{\alpha M^2}{(2m^2)}\right].
\end{align*}
Substitute $\hat \alpha$ for $\alpha$ in this inequality. Observe that by doing so we have $[-(1-\sigma)+{\hat\alpha M^2}/({2m^2})]=0$ implying that the second term vanishes from this expression. Therefore
\[q_i(\lambda_k+\hat\alpha \tilde d_k^{(i)}) \le q_i(\lambda_k)+\hat\alpha\sigma\tilde g_k^{(i)}\phantom{}' \tilde d_k^{(i)}.\]
From the definitions of the local gradient $\tilde g^{(i)}_k$ as the sparse vector with nonzero elements $[\tilde g^{(i)}_k]_j=g_k^j$ for $j\in n_i^{(N)}$ and the local update vector $\tilde d_k^{(i)} := \bar H^{(N)}_k \tilde g^{(i)}_k$ the desired result follows. \end{proof}
We proceed with the second main result using Lemma \ref{ahat}, in the same manner that strict decrease is proven for the Newton method with the standard backtracking line search in \cite{boydbook}[Section 9.5].
\begin{theorem}
Consider the distributed line search in Algorithm \ref{DBLS} with parameter $N$, starting point $\lambda=\lambda_k$, and descent direction $d = d_k^{(N)} = -\bar H_k^{(N)} g_k$ computed by the ADD-$N$ algortihm [cf. \eqref{ADDd} and \eqref{ADD_h}]. If the norm of the dual gradient is bounded away from zero as $\|g_k\|\ge \eta$, the function  value at $\l_{k+1} = \l_k + \a_k d^{(N)}_k$ satisfies
\[q(\lambda_{k+1})-q(\lambda_k) \le -\beta \hat \alpha \sigma m N \eta^2\]
\label{strict}
I.e., the dual function decreases by at least $\alpha \sigma m N \eta^2$
\end{theorem}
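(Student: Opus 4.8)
The plan is to mirror the centralized damped-phase analysis of \cite[Section 9.5]{boydbook}, replacing the single global Armijo inequality by the sum of the $n$ local ones and using Lemma \ref{ahat} in place of the usual lower bound on the Newton stepsize. First I would show that the stepsize actually produced by Algorithm \ref{DBLS} satisfies $\alpha = \min_{i}\alpha_i \ge \beta\hat\alpha$ with $\hat\alpha = 2(1-\sigma)m^2/M^2$ as in Lemma \ref{ahat}. This is the standard backtracking argument: node $i$'s while-loop exits at the first term of the sequence $1,\beta,\beta^2,\dots$ that satisfies the local exit condition \eqref{armijoN}; Lemma \ref{ahat} guarantees $\hat\alpha$ satisfies it and the monotonicity remark following Lemma \ref{neg} guarantees every smaller stepsize does too, so a node that has backtracked below $1$ cannot have fallen below $\beta\hat\alpha$. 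Taking the minimum over $i$ keeps the bound. (Strictly this gives $\alpha\ge\min(1,\beta\hat\alpha)$, which equals $\beta\hat\alpha$ in the regime of interest and otherwise only helps.)

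Next I would pass from the per-node exit conditions to a statement about $q$. By the same remark following Lemma \ref{neg}, the common stepsize $\alpha$, being no larger than any $\alpha_i$, satisfies every node's local Armijo rule,
\[ q_i(\lambda_k + \alpha d_k) \le q_i(\lambda_k) + \sigma\alpha \sum_{j\in\mathcal{N}_i^{(N)}} d_k^j g_k^j ,\]
so summing over $i$ and invoking $q=\sum_i q_i$ from \eqref{ldep} yields
\[ q(\lambda_{k+1}) - q(\lambda_k) \le \sigma\alpha \sum_{i\in\mathcal{N}}\sum_{j\in\mathcal{N}_i^{(N)}} d_k^j g_k^j .\]

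The heart of the argument is to bound this double sum by $-mN\|g_k\|^2$. For fixed $i$, equation \eqref{quad} gives $\sum_{j\in\mathcal{N}_i^{(N)}} d_k^j g_k^j = -(\tilde g_k^{(i)})'\bar H_k^{(N)}\tilde g_k^{(i)}$, and the lower conditioning bound in Assumption \ref{cond} makes this at most $-m\|\tilde g_k^{(i)}\|^2 = -m\sum_{j\in\mathcal{N}_i^{(N)}}(g_k^j)^2$. Summing over $i$ and interchanging the order of summation, index $j$ contributes once for each $i$ with $j\in\mathcal{N}_i^{(N)}$, that is $|\mathcal{N}_j^{(N)}|$ times by symmetry of the $N$-hop relation; hence $\sum_{i}\|\tilde g_k^{(i)}\|^2 = \sum_{j}|\mathcal{N}_j^{(N)}|(g_k^j)^2 \ge N\|g_k\|^2$, using that every $N$-hop neighborhood of a connected network contains at least $N$ nodes. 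Combining the three displays with $\|g_k\|\ge\eta$ and the stepsize bound $\alpha\ge\beta\hat\alpha$ gives $q(\lambda_{k+1})-q(\lambda_k) \le -\sigma\alpha m N\eta^2 \le -\beta\hat\alpha\sigma m N\eta^2$, which is the claim.

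I expect the main obstacle to be the interchange-and-count step that produces the factor $N$: this is the only place where the neighborhood structure is used beyond positive definiteness of $\bar H_k^{(N)}$ (which is all the centralized proof needs), and it rests on the symmetry of the $N$-hop relation together with the elementary graph fact that an $N$-hop neighborhood of a connected graph has at least $N$ nodes. A secondary point to handle cleanly is the gap between $\beta\hat\alpha$ and $\min(1,\beta\hat\alpha)$ in the stepsize bound, and the implicit use of Assumption \ref{cond} as a two-sided eigenvalue bound on $\bar H_k^{(N)}$.
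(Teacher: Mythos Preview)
Your proposal is correct and follows essentially the same route as the paper: bound the actual stepsize below by $\beta\hat\alpha$ via Lemma \ref{ahat} and the backtracking mechanism, apply the local Armijo inequalities at the common $\alpha$, sum over nodes, use Assumption \ref{cond} to replace each $\sum_{j\in\mathcal{N}_i^{(N)}}d_k^jg_k^j$ by $-m\|\tilde g_k^{(i)}\|^2$, and then perform the same interchange-and-count to extract the factor $N\|g_k\|^2$. Your write-up is in fact a bit cleaner than the paper's on the stepsize step (explicitly noting that $\alpha\le\alpha_i$ propagates each local exit condition, and flagging the $\min(1,\beta\hat\alpha)$ edge case), but the argument is the same.
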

\begin{proof}
According to Lemma \ref{ahat} we have
\[q_i(\lambda_{k+1})-q_i(\lambda_k) \le \hat \alpha \sigma \tilde g^{(i)}_k\phantom{}' \tilde d^{(i)}\] because $\hat \alpha$ is a lower bound on $\alpha_i$. Therefore, Algorithm \ref{DBLS} exits with $\alpha\in (\beta \hat \alpha, \hat\alpha)$ and any $\alpha\le \hat\alpha$ satisfies the exit condition in (\ref{armijoN}) therefore
\[q_i(\lambda_{k+1})-q_i(\lambda_k) \le \beta \hat \alpha \sigma \tilde g^{(i)}_k\phantom{}' \tilde d^{(i)}.\]
Applying Assumption \ref{cond} with the definition of $\tilde d^{(i)}$ we get
\[q_i(\lambda_{k+1})-q_i(\lambda_k) \le -\beta\hat \alpha \sigma m\|\tilde g^{(i)}_k\|^2.\]
Summing over all $i$,
\[q(\lambda_{k+1})-q(\lambda_k) \le -\beta\hat \alpha \sigma m \sum_{i=1}^n \|\tilde g^{(i)}_k\|^2.\]
Using the definition of the 2-norm we can write $\sum_{i=1}^n \|\tilde g^{(i)}\|^2 = \sum_{i=1} \sum_{j\in n_i^{(N)}} (g_k^j)^2$. Counting the appearance of each $(g_k^j)^2$ term in this  sum we have that $\sum_{i=1} \sum_{j\in n_i^{(N)}} (g_k^j)^2 = \sum_{i=1}|n_i^{(N)}| (g_k^i)^2$. Notice however that since the network is connected it must be $|n_i^{(N)}|\ge N$, from where it follows $\sum_{i=1} \sum_{j\in n_i^{(N)}} (g_k^j)^2 \leq N \sum_{i=1}^n (g^i_k)^2$. Substituting this expression into the above equation yields
\[q(\lambda_{k+1})-q(\lambda_k) \le -\beta\hat \alpha \sigma m N\sum_{i=1}^n (g^i_k)^2\]
Observe now that $\sum_{i=1}^n (g^i)^2=  \|g_k\|^2$ and substitute the lower bound $\eta\le \|g_k\|$ to obtain the desired relation.
\end{proof}

\begin{figure}[t]
\includegraphics[width=\columnwidth]{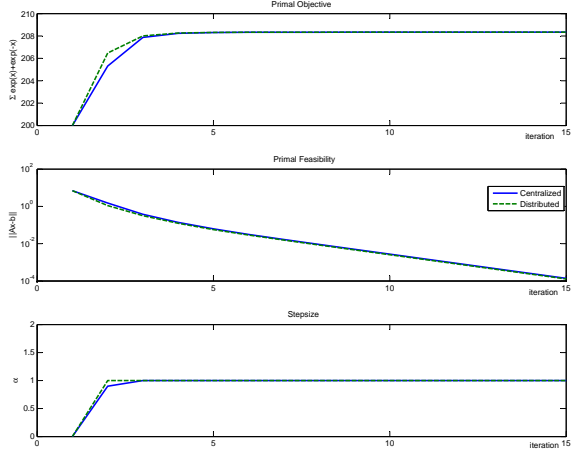}
\centering
\caption{\label{ex} The distributed line search results in solution trajectories nearly equivalent to those of the centralized line search. Top: the Primal Objective follows a similar trajectory in both cases.  Middle: Primal Feasibility is achieved asymptotically. Bottom: unit stepsize is achieved in roughly the same number of steps.}
\end{figure}

Theorem \ref{strict} guarantees global convergence into any error neighborhood $\|g_k\|\le \eta$ around the optimal value because the dual objective is strictly decreasing by, at least, the noninfinitesimal quantity $\beta\hat \alpha \sigma m N \eta^2$ while we remain outside of this neighborhood. In particular, we are guaranteed to reach a point inside the neighborhood $\|g_k\| \le \eta = 3m/(LM^3)\left({1-\bar\rho^{N+1}}-2\sigma\right)$ at which point  Theorem \ref{a1} will be true and the ADD-N algorithm with the local line search becomes simply
\[ \lambda_{k+1} = \lambda_k - \bar H^{(N)}_k g_k.\]
This iteration is shown to have quadratic convergence properties in \cite{acc11}.

\begin{figure}[t]
\includegraphics[width=\columnwidth]{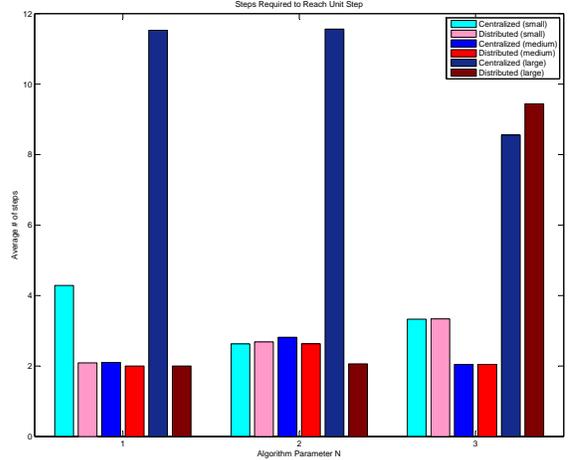}
\centering
\caption{
\label{data}The distributed line search reaches unit stepsize in 2 to 3 iterations.  Fifty simulations were done for each algorithm with N=1, N=2 and N=3 and for Networks with 25 nodes and 100 edges (small), 50 nodes and 200 edges (medium) and 100 nodes and 400 edges (large).}
\end{figure}

\section{Numerical results}
Numerical experiments demonstrate that the distributed version of the backtracking line search is functionally equivalent to the centralized backtracking line search when the descent direction is chosen by the ADD method.  The simulations use networks generated by selecting edges are added uniformly at random but are restricted to connected networks.   The primal objective function is given by $\phi^e(x) = e^{c x^e}+e^{-c x^e}$ where $c$ captures the notion of edge capacity.  For simplicity we let $c=1$ for all edges.

Figure \ref{ex} shows an example of a network optimization problem with 25 nodes and 100 edges being solved using ADD-1 with the centralized and distributed backtracking line searches.  The top plot shows that the trajectory of primal objective is not significantly affected by the choice line search.  The middle plot shows that primal feasibility is approached asymptotically at the same rate for both algorithms. The bottom plot shows that a unit stepsize is achieved in roughly the same number of steps.

In Figure \ref{data} we look closer at the number of steps required to reach a unit stepsize.  We compare the distributed backtracking line search to its centralized counterpart on networks with 25 nodes and 100 edges, 50 nodes and 200 edges and 100 nodes and 400 edges.  For each network optimization problem generated we implemented distributed optimization using ADD-1, ADD-2, and ADD-3.  Most trials required only 2 or 3 iterations to reach $\alpha=1$ for both the centralized and distributed line searches.  The variation came from the few trials which required significantly more iterations.  As might be expected, increasing $N$ causes the distributed and centralized algorithms to behave closer to each other. When we increase the size of the network most trials still only require 2 to 3 iterations to reach $\alpha=1$ but for the cases which take more than 2 iterations we jump from around 10 iterations in the 25 nodes networks to around 40 iterations in 100 node networks.

\section{Conclusion}
We presented an alternative version of the backtracking line search using a local version of the Armijo rule which allows the stepsize for the dual update in the single commodity network flow problem to be computed using only local information.  When this distributed backtracking line search technique is paired with the ADD method for selecting the dual descent direction we recover the key properties of the standard centralized backtracking line search: a strict decrease in the dual objective and unit stepsize in a region around the optimal.  We use simulations to demonstrate that the distributed backtracking line search is functionally equivalent to its centralized counterpart.

This work focuses on line searches when the ADD-N method is used to select the descent direction, however the proof method relies primarily on the sparsity structure of the inverse hessian approximation.  This implies that our line search method could be applied with other descent directions provided they have are themselves depend only on local information.

\vfill

\bibliographystyle{amsplain}
\bibliography{distributed}
\end{document}